\documentclass{article}
\usepackage{arxiv}

\usepackage[utf8]{inputenc} % allow utf-8 input
\usepackage[T1]{fontenc}    % use 8-bit T1 fonts
\usepackage{hyperref}       % hyperlinks
\usepackage{url}            % simple URL typesetting
\usepackage{booktabs}       % professional-quality tables
\usepackage{amsfonts}       % blackboard math symbols
\usepackage{nicefrac}       % compact symbols for 1/2, etc.
\usepackage{microtype}      % microtypography
\usepackage{lipsum}
\usepackage{mathtools,physics}
\usepackage{enumerate}
\usepackage{graphicx}
\graphicspath{ {./images/} }
\usepackage{amsmath}
\usepackage{amssymb}
\usepackage{amsthm}
\usepackage{stackengine} 
\stackMath
\usepackage[framed,autolinebreaks,useliterate]{mcode} % for MATLAB code
\usepackage{centernot}
\usepackage{soul} % strikeout
\usepackage{centernot}
\usepackage{authblk}

\newtheorem{theorem}{Theorem}[section]
\newtheorem{lemma}[theorem]{Lemma}
\newtheorem{corollary}[theorem]{Corollary}
\newtheorem{remark}[theorem]{Remark}
\newtheorem{example}[theorem]{Example}

\newtheorem{definition}[theorem]{Definition}

\newcommand{\rng}[1]{\mathcal{C}(#1)}
\newcommand{\nullsp}[1]{\mathcal{N}(#1)}
\newcommand{\pinv}[1]{{#1}^+\!}

\newcommand{\vect}[1]{\operatorname{vec}\left({#1}\right)}

\DeclarePairedDelimiter\brac{(}{)}

\newcommand{\bK}{\mathbb{K}}
\newcommand{\bN}{\mathbb{N}}
\newcommand{\bC}{\mathbb{C}}
\newcommand{\bR}{\mathbb{R}}
\newcommand{\cV}{\mathcal{V}}

\newcommand{\diag}{\operatorname{diag}}

\makeatletter
\let\save@mathaccent\mathaccent
\newcommand*\if@single[3]{%
  \setbox0\hbox{${\mathaccent"0362{#1}}^H$}%
  \setbox2\hbox{${\mathaccent"0362{\kern0pt#1}}^H$}%
  \ifdim\ht0=\ht2 #3\else #2\fi
  }
%The bar will be moved to the right by a half of \macc@kerna, which is computed by amsmath:
\newcommand*\rel@kern[1]{\kern#1\dimexpr\macc@kerna}
%If there's a superscript following the bar, then no negative kern may follow the bar;
%an additional {} makes sure that the superscript is high enough in this case:
\newcommand*\widebar[1]{\@ifnextchar^{{\wide@bar{#1}{0}}}{\wide@bar{#1}{1}}}
%Use a separate algorithm for single symbols:
\newcommand*\wide@bar[2]{\if@single{#1}{\wide@bar@{#1}{#2}{1}}{\wide@bar@{#1}{#2}{2}}}
\newcommand*\wide@bar@[3]{%
  \begingroup
  \def\mathaccent##1##2{%
%Enable nesting of accents:
    \let\mathaccent\save@mathaccent
%If there's more than a single symbol, use the first character instead (see below):
    \if#32 \let\macc@nucleus\first@char \fi
%Determine the italic correction:
    \setbox\z@\hbox{$\macc@style{\macc@nucleus}_{}$}%
    \setbox\tw@\hbox{$\macc@style{\macc@nucleus}{}_{}$}%
    \dimen@\wd\tw@
    \advance\dimen@-\wd\z@
%Now \dimen@ is the italic correction of the symbol.
    \divide\dimen@ 3
    \@tempdima\wd\tw@
    \advance\@tempdima-\scriptspace
%Now \@tempdima is the width of the symbol.
    \divide\@tempdima 10
    \advance\dimen@-\@tempdima
%Now \dimen@ = (italic correction / 3) - (Breite / 10)
    \ifdim\dimen@>\z@ \dimen@0pt\fi
%The bar will be shortened in the case \dimen@<0 !
    \rel@kern{0.6}\kern-\dimen@
    \if#31
      \overline{\rel@kern{-0.6}\kern\dimen@\macc@nucleus\rel@kern{0.4}\kern\dimen@}%
      \advance\dimen@0.4\dimexpr\macc@kerna
%Place the combined final kern (-\dimen@) if it is >0 or if a superscript follows:
      \let\final@kern#2%
      \ifdim\dimen@<\z@ \let\final@kern1\fi
      \if\final@kern1 \kern-\dimen@\fi
    \else
      \overline{\rel@kern{-0.6}\kern\dimen@#1}%
    \fi
  }%
  \macc@depth\@ne
  \let\math@bgroup\@empty \let\math@egroup\macc@set@skewchar
  \mathsurround\z@ \frozen@everymath{\mathgroup\macc@group\relax}%
  \macc@set@skewchar\relax
  \let\mathaccentV\macc@nested@a
%The following initialises \macc@kerna and calls \mathaccent:
  \if#31
    \macc@nested@a\relax111{#1}%
  \else
%If the argument consists of more than one symbol, and if the first token is
%a letter, use that letter for the computations:
    \def\gobble@till@marker##1\endmarker{}%
    \futurelet\first@char\gobble@till@marker#1\endmarker
    \ifcat\noexpand\first@char A\else
      \def\first@char{}%
    \fi
    \macc@nested@a\relax111{\first@char}%
  \fi
  \endgroup
}
\makeatother
\title{The Reverse Order Law and the Riccati Equation}

\author[1,2]{
  Oskar Kędzierski
}

\affil[1]{Instytut Matematyki UW, Warsaw, Poland \authorcr
  \texttt{oskar@mimuw.edu.pl}
}

\affil[2]{
  NASK National Research Institute, Warsaw, Poland \authorcr
  \texttt{oskar.kedzierski@nask.pl}
}

% \author{
%   Oskar Kędzierski\\
%   NASK National Research Institute\\
%   Warsaw, Poland\\
%   \texttt{oskar.kedzierski@nask.pl}\\
% }

  %% examples of more authors
 % \AND
 %   Michał Karpowicz\\
 %  NASK National Research Institute\\
 %  Warsaw, Poland\\
 %  \texttt{michal.karpowicz@nask.pl}\\
  %% \And
  %% Coauthor \\
  %% Affiliation \\
  %% Address \\
  %% \texttt{email} \\
  %% \And
  %% Coauthor \\
  %% Affiliation \\
  %% Address \\
  %% \texttt{email} \\

\begin{document}
\maketitle

\begin{abstract}
We give a full analytic solution to a particular case of the algebraic Riccati equation $XWW^*WX=W^*$ for any matrix $W$ (possibly non-square or non-symmetric) in using the Schur method, terms of the SVD decomposition of $W$. In particular, $(WX)^3=WX$ and $(XW)^3=XW$ for any solution $X$. We show that for $W=AB$, matrix $X=\pinv{B}\pinv{A}$ is a solution of this equation if and only if the reverse order law holds, i.e., $\pinv{(AB)}=\pinv{B}\pinv{A}$. For a Hermitian and invertible $W$ the maximal and stabilizing Hermitian solutions is shown to be equal to $\pinv{W}$. Equivalence to the equation $XWX=\pinv{W}$ is proven.

% We give a full analytic solution to a particular case of the algebraic Riccati equation $XWW^*WX=W^*$ for any matrix $W$ (possibly non-square or non-symmetric) in using the Schur method, terms of the SVD decomposition of $W$. In particular, $(WX)^3=WX$ and $(XW)^3=XW$ for any solution $X$. We show that for $W=AB$, matrix $X=B^+ A^+$ is a solution of this equation if and only if the reverse order law holds, i.e., ${(AB)}^+=B^+ A^+$. For a Hermitian and invertible $W$ the maximal and stabilizing Hermitian solutions is shown to be equal to $W^+$. Equivalence to the equation $XWX=W^+$ is proven.

% Greville's theorem states that the reverse order law $\pinv{\brac*{AB}}=\pinv{B}\pinv{A}$ is equivalent to the condition $\pinv{A}ABB^*A^*AB\pinv{B}=BB^*A^*A$. In this paper we investigate the question asked by Michał Karpowicz if the condition that $X=\pinv{B}\pinv{A}$ satisfies the related equation $X(AB)(AB)^*(AB)X=\brac*{AB}^*$ is equivalent to the reverse order law for the Moore--Penrose pseudoinverse. This leads to the solution of the Riccati equation. We give an analytic solution to this particular Riccati equation using the Schur method. The singular values of a square matrix such that $X^2=I$ are characterized.
\end{abstract}

\section{Introduction}
Greville proved in~\cite{Greville_note} that for any compatible complex matrices $A,B$
  \begin{equation}\label{eq:greville}
 \pinv{\brac*{AB}}=\pinv{B}\pinv{A}\Longleftrightarrow \pinv{A}ABB^*A^*AB\pinv{B}=BB^*A^*A.    
 \end{equation}
By premultiplyig the second equation by $\pinv{B}$, and by $\pinv{A}$ on the right, one gets
that $X=\pinv{B}\pinv{A}$ satifies a particular case of the algebraic Ricatti equation~\cite{riccati_book}
\begin{equation}\label{eq:riccati}
XWW^*WX=W^*,    
\end{equation}
for $W=AB$.
By premultiplying equation~(\ref{eq:riccati}) by $XW$ on the left and by $WX$ on the right, one sees that for any $N\in\bN$
\[(XW)^NW^*(WX)^N=W^*,\]
 which suggests some periodicity of matrices $XW$ and $WX$. We solve analytically the equation~(\ref{eq:riccati}) for any matrix $W\in\bK^{m\times n}$
 using the Schur method as follows. If $W=U\Sigma V^*$ is any SVD decomposition of $W$ over $\bK$ then $X$ satisfies~(\ref{eq:riccati}) if and only if 
  \[X=V\pinv{\Sigma}DU^*+P_{\nullsp{W}} Y P_{\nullsp{W^*}},\]
  for an arbitrary matrix $Y\in\bK^{n\times m}$ and a particular block diagonal matrix $D=\diag(X_1,\ldots,X_t)$, where $X_i$ are arbitrary square matrices over $\bK$,
  of sizes equal to the multiplicities of the singular values of $W$ such that $X_i^2=I$.
 By a result of Tian, we show that the following generalization of~(\ref{eq:greville})
holds, i.e., 
 \[\pinv{\brac*{AB}}=\pinv{B}\pinv{A}\Longleftrightarrow \brac*{\pinv{B}\pinv{A}}WW^*W\brac*{\pinv{B}\pinv{A}}=W^*,\]
 with $W=AB$, that is, $\pinv{\brac*{AB}},\pinv{B}\pinv{A}$ solve both equation~(\ref{eq:riccati}) for $W=AB$ if and only if they are equal.

 I would like to thank Michał Karpowicz, who observed originally the periodicity and attracted my attention to the Riccati equation.
%\brac*{\pinv{B}\pinv{A}}
%\brac*{AB}\brac*{AB}^*\brac*{AB}\brac*{\pinv{B}\pinv{A}}=\brac*{AB}^*

\section{Notation}
The set of all matrices with $m$ rows, $n$ columns, and coefficients in field $\bK$ is denoted $\bK^{m\times n}$, where $\bK=\bR$ or $\bK=\bC$.  The column space of matrix $A$ is denoted by $\rng{A}$ and the null space by $\nullsp{A}$.  The Moore--Penrose pseudoinverse of matrix $A$ is denoted by $\pinv{A}$. The complex conjugate is denoted by $A^*$. Matrix is Hermitian (or symmetric over $\bR$) if $A^*=A$. A square matrix is unitary (or orthogonal over $\bR$) if $A^*A=I$ where $I$ denotes the unit matrix. SVD stands for singular value decomposition, i.e., decomposition $A=U\Sigma V^*$, where $U,V$ are unitary matrices, and $\Sigma$ is a real non--negative generalized diagonal matrix with decreasing diagonal entries. If $\rank A=r$ then the first $r$ columns of matrix $U$ are called left singular vectors of $A$ and likewise the first $r$ columns of matrix $V$ are called right singular vectors of $A$. For any matrix $A\in\bK^{m\times n}$, the matrix $A_{p:q,s:t}$ is a submatrix of $A$ consisting of rows from $p$ to $q$ and columns from $s$ to $t$, both $q$ and $t$ included (MATLAB notation). A single colon with no numbers indicates that there are no restrictions. For two Hermitian matrices of the same size we write $A\preceq B$ if and only if $B-A$ is positive semidefinite. 

More details on pseudoinverses and generalized inverses can be found in~\cite{BenGreville},\cite{WangWeiQiao} and \cite{Bernstein}.

\section{Preliminaries}
\begin{lemma}
Let $W\in\bK^{m\times n}$. Then any eigenvector $w\in\bK^n$ of the matrix, 
\[M=\left[\begin{array}{c|c}
        0 & WW^*W\\ \hline
        W^* &  0\\
    \end{array}
    \right],\]
    which is not in the kernel of $M$, is of the form
    \[w=\left[\begin{array}{c}
        u\\ \hline \pm\frac{1}{\sigma_i}v
    \end{array}
    \right]\in\bK^{m+n},\] 
    where $\sigma\in\bR$ is a singular value of $W$. In addition, $u$ is a left singular vector of $W$ and $v$ is the corresponding right singular vector of $W$, that is, $W^*u=\sigma v$.
\end{lemma}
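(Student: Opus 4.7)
The plan is to exploit the anti-diagonal block structure of $M$. I would write $w = \begin{pmatrix} u \\ v' \end{pmatrix}$ with $u \in \bK^m$ and $v' \in \bK^n$, and expand $Mw = \lambda w$ into the coupled system
\[ WW^*W v' = \lambda u, \qquad W^* u = \lambda v'. \]
Since $w$ is not in the kernel of $M$, necessarily $\lambda \neq 0$, and from the second equation this forces $u \neq 0$ as well, for otherwise $v' = 0$ and $w = 0$.

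The key step is to eliminate $v'$ by substituting the second equation into the first, producing $(WW^*)^2 u = \lambda^2 u$. In other words, $u$ is an eigenvector of $(WW^*)^2$ with eigenvalue $\lambda^2$. Because $WW^*$ is Hermitian positive semidefinite, with spectrum equal to the squares of the singular values of $W$, its nonzero eigenspaces coincide with those of its square. Hence $u$ is already an eigenvector of $WW^*$ itself, with eigenvalue $\sigma_i^2$ for some singular value $\sigma_i$, and consequently $\lambda = \pm \sigma_i^2$.

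What remains is to recognize $u$ as a left singular vector of $W$ associated with $\sigma_i$ and to set $v := W^* u / \sigma_i$. A short check gives $W v = \sigma_i u$, so $v$ is the corresponding right singular vector and $W^* u = \sigma_i v$ as required by the statement. Plugging back into $v' = W^* u / \lambda$ then yields $v' = \pm v / \sigma_i$, which is precisely the claimed form of $w$.

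The only non-routine point is the passage from $(WW^*)^2 u = \lambda^2 u$ to $WW^* u = \sigma_i^2 u$; this is immediate from the spectral theorem applied to the Hermitian positive semidefinite matrix $WW^*$, so I do not anticipate any real obstacle beyond bookkeeping.
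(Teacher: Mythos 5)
Your proof is correct and follows essentially the same route as the paper's: expand $Mw=\lambda w$ into the coupled system, use $\lambda\neq 0$ to eliminate the second block and obtain $(WW^*)^2u=\lambda^2u$, conclude $\lambda=\pm\sigma_i^2$, and back-substitute to get $v'=\pm v/\sigma_i$. The only (cosmetic) difference is that the paper carries out the elimination in explicit SVD coordinates, writing $(\Sigma\Sigma^*)^2=\diag(\sigma_1^4,\ldots,\sigma_r^4,0,\ldots,0)$, whereas you invoke the spectral theorem on the Hermitian positive semidefinite matrix $WW^*$ directly.
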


\begin{proof}
    Let $r=\rank W$ and let $W=U\Sigma V^*$ be an SVD decomposition of matrix $W$ over $\bK$. Then $WW^*W=U\Sigma\Sigma^*\Sigma V^*$.
    Assume that $w=\left[\begin{array}{c}
        w_1 \\ \hline w_2
    \end{array}
    \right]$ is an eigenvector of matrix $M$, that is $Mw=\lambda w$ for some $\lambda\in\bK$ and $w\neq 0$.  Then
  \[\left[\begin{array}{c|c}
        0 & U\Sigma\Sigma^*\Sigma V^*\\ \hline
        V^*\Sigma^* U &  0\\
    \end{array}
    \right]
    \left[\begin{array}{c}
        w_1 \\ \hline w_2
    \end{array}
    \right]=\lambda
    \left[\begin{array}{c}
        w_1 \\ \hline w_2
    \end{array}
    \right],
    \]
 implies that
 \[\left\{
 \begin{array}{ccc}
    U\Sigma\Sigma^*\Sigma V^* w_2 & = & \lambda w_1,\\
     V \Sigma^* U^* w_1 & = & \lambda w_2. 
 \end{array}
 \right.\]
 Assume that $\lambda\neq 0 $ and substitute the second equation to the first one, 
 \[U(\Sigma\Sigma^*)^2  U^*w_1=\lambda^2 w_1,\]
 where $(\Sigma\Sigma^*)^2=\diag(\sigma_1^4,\ldots,\sigma_r^4,0,\ldots,0)\in\bK^{m\times m}$.
 That is $\lambda=\pm \sigma^2$, where $\sigma=\sigma_i$ for some $i=1,\ldots,r$, and $w_1=u$ is some left singular vector of the matrix $W$ corresponding to the singular value $\sigma$ (it is not necessarily a multiple of a column of matrix $U$ when $\sigma$ is a multiple singular value). From the second equation 
 \[V \Sigma^* U^* u  =  \pm\sigma^2 w_2 \Longrightarrow w_2=\pm\frac{1}{\sigma}v.\]
\end{proof}

\begin{lemma}\label{lem:diagonalizability}
Let $W\in\bK^{m\times n}$. Then matrix
\[M=\left[\begin{array}{c|c}
        0 & WW^*W\\ \hline
        W^* &  0\\
    \end{array}
    \right],\]
    is diagonalizable over $\bK$.
\end{lemma}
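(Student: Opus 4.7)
My plan is to build an explicit basis of $\bK^{m+n}$ consisting of eigenvectors of $M$, and then count dimensions to confirm it really is a basis. First, I would fix an SVD $W=U\Sigma V^*$ over $\bK$ with $r=\operatorname{rank} W$, let $\sigma_1,\ldots,\sigma_r$ be the non-zero singular values listed with multiplicity, and let $u_j$ (resp.\ $v_j$) denote the $j$th column of $U$ (resp.\ $V$), so that $Wv_j=\sigma_j u_j$ and $W^*u_j=\sigma_j v_j$. Guided by the preceding lemma, for each $j=1,\ldots,r$ I would set
\[
  w_j^{\pm} \;=\; \begin{pmatrix} u_j \\ \pm\tfrac{1}{\sigma_j}v_j\end{pmatrix}\in\bK^{m+n},
\]
and a direct check using only these two identities shows that $Mw_j^{\pm}=\pm\sigma_j^2\,w_j^{\pm}$. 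This produces $2r$ non-zero eigenvectors of $M$.

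Second, I would describe $\nullsp{M}$ explicitly. The system $M\begin{pmatrix}w_1\\w_2\end{pmatrix}=0$ is equivalent to $W^*w_1=0$ together with $WW^*Ww_2=0$; since $WW^*$ is injective on $\rng{W}$, the second condition reduces to $Ww_2=0$, so $\nullsp{M}=\nullsp{W^*}\oplus\nullsp{W}$, of dimension $(m-r)+(n-r)=m+n-2r$. Picking any bases of these two null spaces gives $m+n-2r$ further eigenvectors of $M$ for the eigenvalue $0$, bringing the total count to exactly $m+n$.

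Third, I would verify linear independence of the assembled collection. Eigenvectors corresponding to distinct eigenvalues are automatically independent, so the only subtlety is inside each eigenspace associated with a repeated eigenvalue $\pm\sigma^2$: for such $\sigma$, the first blocks of the vectors $w_j^{+}$ whose $\sigma_j$ equals $\sigma$ are orthonormal columns of $U$, hence linearly independent, and the identical argument handles the $w_j^{-}$ family and the chosen kernel basis. This yields $m+n$ linearly independent eigenvectors in $\bK^{m+n}$, proving that $M$ is diagonalizable over $\bK$.

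The main obstacle I anticipate is the bookkeeping around repeated singular values: since several distinct eigenvectors $w_j^{\pm}$ can share the same eigenvalue, diagonalizability cannot be deduced merely from distinctness of the spectrum $\{0,\pm\sigma_i^2\}$, and one must genuinely exhibit enough independent eigenvectors inside each eigenspace. This is exactly what the orthonormality of the singular vectors $u_j,v_j$ supplies, so the argument reduces to a careful dimension count rather than any further computation.
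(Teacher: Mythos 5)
Your proof is correct and follows essentially the same route as the paper: exhibit the $2r$ eigenvectors $\bigl(u_j,\ \pm\tfrac{1}{\sigma_j}v_j\bigr)$ for the eigenvalues $\pm\sigma_j^2$, identify $\nullsp{M}=\nullsp{W^*}\times\nullsp{W}$, and count $2r+(m-r)+(n-r)=m+n$ independent eigenvectors. Your write-up is in fact slightly more careful than the paper's on two points it leaves implicit — that $WW^*Ww_2=0$ forces $Ww_2=0$, and the linear-independence check within a repeated eigenvalue via orthonormality of the columns of $U$.
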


\begin{proof}
    Let $r=\rank W$ and let $W=U\Sigma V^*$ be an SVD decomposition of matrix $W$ over $\bK$. Let $u_1,\ldots,u_r\in\bK^m$ be the corresponding left singular vectors of $W$, that is, first $r$ columns of matrix $U$. Similarly, let $v_1,\ldots,v_r\in\bK^n$ be the corresponding right singular vectors of $W$, that is, the first $r$ columns of $V$.
    Then the $2r$ vectors
    \[\left[\begin{array}{c}
        u_i \\ \hline \pm\frac{1}{\sigma_i}v_i
    \end{array}
    \right]\in\bK^{m+n},\] 
    are eigenvectors of $M$ corresponding to the eigenvalue $\pm\sigma_i^2$, as $Wv_i=\sigma_i u_i$ and $W^*u_i=\sigma_i v_i$.
    for $i=1,\ldots,\rank W$. The null space of $M$ contains vectors of the form
\[w=\left[\begin{array}{c}
        w_1 \\ \hline w_2
    \end{array}
    \right]\in\bK^{m+n},\quad\text{where}\quad w_1\in\nullsp{W^*}, w_2\in\nullsp{W}.\]
    This gives in total $2r+(m-r)+(n-r)=m+n$ linearly independent vectors and $M$ is an $(m+n)$-by-$(m+n)$ matrix. Therefore $\nullsp{M}=\nullsp{W^*}\times\nullsp{W}$ and $M$ is diagonalizable.
\end{proof}

\begin{lemma}\label{lem:decomp}
     Let $W\in\bK^{m\times n}$ and let $\cV\subset \bK^{m+n}$ be an invariant $m$-dimensional subspace of matrix $M$,
     such that the projection $\pi\colon\bR^m\times\bR^n\rightarrow\bR^m$ is surjective when restricted to $\cV$. Then for any $i=1,\ldots,t$
     \[\dim\cV_{(\sigma_i^2)}+\dim\cV_{(-\sigma_i^2)}=\dim V_{WW^*,(\sigma_i)}.\]
     Moreover, $\dim\cV_{(0)}=m-r$.
\end{lemma}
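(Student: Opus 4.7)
The plan is to exploit the fact that $\dim\cV=m$ together with surjectivity of $\pi|_{\cV}$ forces $\pi|_{\cV}$ to be a linear isomorphism $\cV\to\bK^{m}$. Injectivity then lets one transfer the direct sum decomposition of $\cV$ into eigenspaces of $M$ over to a direct sum decomposition of $\bK^{m}$ into spectral subspaces of $WW^{*}$, and the claimed dimension equalities will follow by matching the pieces.

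First, using Lemma~\ref{lem:diagonalizability}, I would write
\[
\cV \;=\; \cV_{(0)} \;\oplus\; \bigoplus_{i=1}^{t}\bigl(\cV_{(\sigma_i^2)} \oplus \cV_{(-\sigma_i^2)}\bigr),
\]
with $\cV_{(\lambda)}=\cV\cap\nullsp{M-\lambda I}$, which is legal because $\cV$ is invariant under the diagonalizable operator $M$. The next step is to locate each $\pi(\cV_{(\lambda)})$ inside $\bK^{m}$. Writing $M(u,y)^{\top}=\lambda(u,y)^{\top}$ and eliminating $y$ via the second block equation gives $(WW^{*})^{2}u=\lambda^{2}u$; for $\lambda=\pm\sigma_{i}^{2}\neq 0$ the positive semidefiniteness of $WW^{*}$ then forces $u\in V_{WW^{*},(\sigma_{i})}$. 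The description of $\nullsp{M}$ already obtained in Lemma~\ref{lem:diagonalizability} gives $\pi(\cV_{(0)})\subset\nullsp{W^{*}}$.

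Combining this with the spectral decomposition $\bK^{m}=\nullsp{W^{*}}\oplus\bigoplus_{i=1}^{t}V_{WW^{*},(\sigma_{i})}$ and with surjectivity of $\pi|_{\cV}$, I would argue that every $v\in\bK^{m}$ admits a unique decomposition along this sum, and since the only way to realise $v$ as an element of $\pi(\cV)$ comes from the subspaces $\pi(\cV_{(\lambda)})$ sitting inside the respective summands, uniqueness of the decomposition forces $\pi(\cV_{(0)})=\nullsp{W^{*}}$ and $\pi(\cV_{(\sigma_{i}^{2})})+\pi(\cV_{(-\sigma_{i}^{2})})=V_{WW^{*},(\sigma_{i})}$. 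The latter sum is direct because $\pi|_{\cV}$ is injective. Passing to dimensions (and using that $\pi|_{\cV}$ preserves dimension on each summand) yields $\dim\cV_{(0)}=m-r$ and $\dim\cV_{(\sigma_{i}^{2})}+\dim\cV_{(-\sigma_{i}^{2})}=\dim V_{WW^{*},(\sigma_{i})}$, as required.

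The main bookkeeping obstacle is the step that upgrades surjectivity of $\pi|_{\cV}$ onto $\bK^{m}$ into the precise equalities $\pi(\cV_{(0)})=\nullsp{W^{*}}$ and $\pi(\cV_{(\sigma_{i}^{2})})+\pi(\cV_{(-\sigma_{i}^{2})})=V_{WW^{*},(\sigma_{i})}$; this is exactly where one invokes uniqueness of the spectral decomposition of $\bK^{m}$ associated with $WW^{*}$. Beyond this matching argument, no computational difficulty is expected.
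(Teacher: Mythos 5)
Your proof is correct and follows essentially the same route as the paper: decompose $\cV$ into eigenspaces of $M$ (using that the restriction of a diagonalizable operator to an invariant subspace is diagonalizable), note that $\pi$ sends each eigenspace into the corresponding spectral subspace of $WW^*$, and use surjectivity of $\pi|_{\cV}$ together with the decomposition $\bK^m=\nullsp{W^*}\oplus\bigoplus_i V_{WW^*,(\sigma_i)}$ to force equalities. The only cosmetic difference is in the final counting step: you invoke injectivity of $\pi|_{\cV}$ (valid, since $\dim\cV=m$ and $\pi|_{\cV}$ is onto), whereas the paper uses the inequality $\dim\pi(S)\le\dim S$ on each summand plus the fact that both sides sum to $m$.
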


\begin{proof}
    Restriction of a diagonalizable matrix to an invariant subspace is diagonalizable, cf.~\cite[Chapter~XIV, Ex.~13]{LangAlgebra} (minimal polynomial of an endomorphism is divisible by a minimal polynomial of its restriction to an invariant subspace). Therefore, $\cV$ as a direct sum of eigenspaces (a priori, some possibly equal to 0)
    \[\cV=\cV_{(\sigma_1^2)}\oplus \cV_{(-\sigma_1^2)}\oplus \cdots\oplus \cV_{(\sigma_t^2)}\oplus \cV_{(-\sigma_t^2)}\oplus \cV_{(0)},\]
    where $\sigma_1,\ldots,\sigma_t\in\bR_{>0}$ are (unlike in Lemma~\ref{lem:diagonalizability}) pairwise distinct singular values of matrix $W$. and let $s_i=\dim V_{WW^*,(\sigma_i)}$ be the multiplicity of $\sigma_i$ for $i=1,\ldots,t$.
    By definition (cf. the proof of Lemma~\ref{lem:diagonalizability}) and the surjectivity assumption
    \[\pi\brac*{\cV_{(\sigma_i^2)}\oplus \cV_{(-\sigma_i^2)} }=V_{WW^*,(\sigma_i)},\]
    \[\pi\brac*{\cV_{(0)}}=V_{WW^*,(0)}=\nullsp{W^*}.\]
    The dimension of the image of a subspace does not increase, therefore
    \[\dim {\cV_{(\sigma_i^2)}\oplus \cV_{(-\sigma_i^2)}}\ge \dim V_{WW^*,(\sigma_i)},\]
    \[\dim \cV_{(0)}\ge \dim V_{WW^*,(0)}.\]
    Neither of those inequalities may be strict as both sides sum to $m$. 
\end{proof}

The following lemma is well--known, cf.~\cite[Equation~(8.23)]{Tian512}.
\begin{lemma}\label{lem:equal_ranks}
For any matrices $A\in\bK^{m\times k},B\in\bK^{k\times n}$
\[\rank(\pinv{B}\pinv{A})=\rank(AB)=\rank\brac*{{\brac*{AB}}^*}=\rank\brac*{\pinv{\brac*{AB}}}.\]
\end{lemma}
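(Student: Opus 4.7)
The plan is to chain three equalities. The middle equality $\rank(AB) = \rank\brac*{(AB)^*}$ is the standard fact that a matrix and its conjugate transpose share the same rank. The right-most equality $\rank\brac*{\pinv{(AB)}} = \rank(AB)$ follows immediately from the SVD characterization of the pseudoinverse: if $AB = U\Sigma V^*$ with $\Sigma$ carrying $r$ nonzero entries on its generalized diagonal, then $\pinv{(AB)} = V\pinv{\Sigma}U^*$ has exactly $r$ nonzero entries as well, so both matrices have rank $r$.

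For the left-most equality $\rank(\pinv{B}\pinv{A}) = \rank(AB)$, which is the main content, I would apply the product-rank formula $\rank(XY) = \rank(Y) - \dim\brac*{\nullsp{X}\cap\rng{Y}}$ to the two products $\pinv{B}\pinv{A}$ and $B^*A^*$. This yields
\[\rank(\pinv{B}\pinv{A}) = \rank(\pinv{A}) - \dim\brac*{\nullsp{\pinv{B}}\cap\rng{\pinv{A}}},\]
\[\rank(B^*A^*) = \rank(A^*) - \dim\brac*{\nullsp{B^*}\cap\rng{A^*}}.\]
The next step is to invoke three standard Moore--Penrose identities: $\rng{\pinv{A}} = \rng{A^*}$, $\nullsp{\pinv{B}} = \nullsp{B^*}$, and $\rank(\pinv{A}) = \rank(A) = \rank(A^*)$. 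These identifications make the right-hand sides of the two displays coincide term by term, so $\rank(\pinv{B}\pinv{A}) = \rank(B^*A^*) = \rank\brac*{(AB)^*}$, which together with the two observations of the first paragraph closes the chain.

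I do not expect a genuine obstacle; the lemma is called well-known precisely because both ingredients — the product-rank formula and the range/null-space identities for the pseudoinverse — are textbook material, available in any of the references on generalized inverses cited earlier. If one preferred to bypass the range/null-space bookkeeping entirely, an equally short alternative would be to substitute the SVDs $A=U_A\Sigma_AV_A^*$ and $B=U_B\Sigma_BV_B^*$, observe that multiplication by $\Sigma$'s and $\pinv{\Sigma}$'s on either side merely selects the same leading block of $V_A^*U_B$ (respectively of its conjugate transpose), and conclude that the four quantities all equal the rank of that common block.
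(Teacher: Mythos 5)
Your argument is correct, but it takes a genuinely different route from the paper's. The paper proves the key equality $\rank(\pinv{B}\pinv{A})=\rank(B^*A^*)$ by a pair of factorizations: from $A^*=\pinv{A}AA^*$ and $B^*=B^*B\pinv{B}$ it writes $B^*A^*=B^*B\brac*{\pinv{B}\pinv{A}}AA^*$, giving $\rank(B^*A^*)\le\rank(\pinv{B}\pinv{A})$, and then exhibits the symmetric factorization $\pinv{B}\pinv{A}=\pinv{B}\brac*{\pinv{B}}^*\brac*{B^*A^*}\pinv{\brac*{A^*}}\pinv{A}$ for the reverse inequality. You instead apply the product-rank formula $\rank(XY)=\rank(Y)-\dim\brac*{\nullsp{X}\cap\rng{Y}}$ to both products and match the terms via $\rng{\pinv{A}}=\rng{A^*}$, $\nullsp{\pinv{B}}=\nullsp{B^*}$, and $\rank\pinv{A}=\rank A^*$; all three identities are standard and your bookkeeping is right, so the chain closes. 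Your version makes the underlying reason transparent — the two products are built from factors with identical ranges and null spaces, so they must have equal rank — at the cost of importing the Sylvester-type rank formula as an external ingredient; the paper's version is more self-contained, using only identities verifiable directly from the SVD, but hides the geometry inside an algebraic sandwich. Your closing SVD alternative (both products reduce to the same leading block of $V_A^*U_B$ up to invertible diagonal scalings) is also sound and is perhaps the most concrete of the three.
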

\begin{proof}
Since $A^*=A^+AA^*$ and $B^*=B^*BB^+$ (for example, by the SVD decomposition)
    $B^*A^*=B^*B\brac*{B^+A^+}AA^*$,
    and therefore $\rank(B^*A^*)\le\rank(B^+A^+)$.
    Similarly 
    \[\pinv{A}=A^*\pinv{\brac*{A^*}}\pinv{A},\quad \pinv{B}=\pinv{B}\brac*{\pinv{B}}^*B^*\quad\text{and}\quad
    \pinv{B}\pinv{A}=\pinv{B}\brac*{\pinv{B}}^*\brac*{B^*A^*}\pinv{\brac*{A^*}}\pinv{A}\]
    so $\rank(B^*A^*)\ge\rank(B^+A^+)$.
\end{proof}

\section{Schur Trick}
\begin{lemma}\label{lem:solutions}
    
    Let $W\in\bK^{m\times n}$. Then $X\in\bK^{n\times m}$ solves the equation~(\ref{eq:riccati}) if and only if
    
    \begin{equation}\label{eq:schur}
\left[\begin{array}{c|c}
        0 & WW^*W\\ \hline
        W^* &  0\\
    \end{array}
    \right]
    \left[\begin{array}{c}
        I \\ \hline X
    \end{array}
    \right]=
    \left[\begin{array}{c}
        I \\ \hline X
    \end{array}
    \right]
    WW^*WX.
    \end{equation}
    
Moreover, solutions of equation~(\ref{eq:riccati}) are in one-to-one correspondence with $m$-dimensional invariant subspaces $\cV\subset\bK^{(m+n)\times(m+n)}$ of matrix $M=\left[\begin{array}{c|c}
        0 & WW^*W\\ \hline
        W^* &  0\\
    \end{array}
    \right]$ such that $\pi(\cV)=\bK^m$ under the projection $\pi\colon\bK^m\times\bK^m\rightarrow \bK^m$.
\end{lemma}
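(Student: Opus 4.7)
The plan is to verify the Schur trick by direct block multiplication, and then set up the bijection between solutions and invariant subspaces through the standard graph construction.

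First I would unpack equation~(\ref{eq:schur}) block by block. The left-hand side gives
\[M\begin{pmatrix} I \\ X \end{pmatrix}=\begin{pmatrix} WW^*WX \\ W^* \end{pmatrix},\]
while the right-hand side gives
\[\begin{pmatrix} I \\ X \end{pmatrix}WW^*WX=\begin{pmatrix} WW^*WX \\ XWW^*WX \end{pmatrix}.\]
The top blocks coincide automatically, and equality of the bottom blocks is exactly $XWW^*WX=W^*$, i.e.~equation~(\ref{eq:riccati}). This establishes the first equivalence.

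For the one-to-one correspondence, I would construct the bijection in both directions. Given a solution $X\in\bK^{n\times m}$, define $\cV$ as the column span of $\begin{pmatrix} I \\ X\end{pmatrix}\in\bK^{(m+n)\times m}$. Since the top block is $I$, the matrix has rank $m$, so $\dim\cV=m$ and $\pi(\cV)=\bK^m$. The already-established identity (\ref{eq:schur}) says $M\begin{pmatrix} I \\ X\end{pmatrix}=\begin{pmatrix} I \\ X\end{pmatrix}(WW^*WX)$, so $M\cV\subset\cV$. Conversely, suppose $\cV\subset\bK^{m+n}$ is an $m$-dimensional $M$-invariant subspace with $\pi(\cV)=\bK^m$. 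Then $\pi|_\cV\colon\cV\to\bK^m$ is a surjection between spaces of equal dimension, hence a linear isomorphism; its inverse has the form $v\mapsto\begin{pmatrix} v \\ Xv\end{pmatrix}$ for a unique $X\in\bK^{n\times m}$. Thus $\cV$ is the column span of $\begin{pmatrix} I \\ X\end{pmatrix}$.

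It remains to observe that $M$-invariance of this graph forces the Schur identity. Since $M\cV\subset\cV$ and $\cV=\operatorname{range}\begin{pmatrix} I \\ X\end{pmatrix}$, there exists an $m\times m$ matrix $A$ with $M\begin{pmatrix} I \\ X\end{pmatrix}=\begin{pmatrix} I \\ X\end{pmatrix}A$; reading off the top block gives $A=WW^*WX$, so equation~(\ref{eq:schur}) holds, whence $X$ satisfies~(\ref{eq:riccati}) by the first part of the lemma. The two constructions are mutually inverse by their very definitions (each solution $X$ determines $\cV$ as the graph, and each admissible $\cV$ reads off the same $X$ as the second block of its $\pi$-inverse), so the correspondence is bijective.

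The main technical obstacle is essentially notational: one must be careful that ``invariant subspace with $\pi(\cV)=\bK^m$'' is exactly the condition which rules out pathologies (a generic $m$-dimensional invariant subspace need not be a graph over the first factor), and that this is exactly what makes $X$ well-defined; the computation itself is routine once the block expansion is set up.
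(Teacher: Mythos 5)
Your proof is correct and follows essentially the same route as the paper: the block multiplication for the first equivalence (which the paper treats as immediate) and the graph construction $\cV=\rng{\left[\begin{smallmatrix} I \\ X\end{smallmatrix}\right]}$ with $X$ recovered from the inverse of $\pi|_\cV$ for the bijection. If anything, you are slightly more careful than the paper in checking that invariance of the graph forces the Schur identity and that the two constructions are mutually inverse.
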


\begin{proof}
    Only the second claim requires a proof. If $X$ is a solution, then by equation~(\ref{eq:schur}) then $\cV$ is given by the column space of matrix $\left[\begin{array}{c}
        I \\ \hline X
    \end{array}
    \right]$. On the other hand let $\cV$ be an invariant subspace of $M$ such that $\pi(\cV)=\bK^m$. Fix a basis of $\cV$ and arrange itin columns of matrix $Z=\left[\begin{array}{c}
        V_1 \\ \hline 
        V_2
    \end{array}
    \right]\in\bK^{(m+n)\times n}$. Then, by the assumption, $V_1$ is invertible and the matrix $\left[\begin{array}{c}
        I \\ \hline 
        V_2 V_1^{-1}
    \end{array}\right]$ has the same invariant columns space as $Z$, that is $X=V_2V_1^{-1}$. Different choice of basis of $\cV$ leads
    to a matrix $
    ZC=\left[\begin{array}{c}
        V_1C \\ \hline 
        V_2C
    \end{array}
    \right]$, for some invertible $C\in\bK^{m\times m}$ which induces the same solution $X$.
    % $V_2 V_1^{-1}=V_2C(V_1C)^{-1}$ 
\end{proof}

\section{Main Result}
Note that $X=\pinv{W}$ is always a solution of~(\ref{eq:riccati}).

\begin{theorem}\label{thm:main}
    Matrix $X\in\bK^{n\times m}$ is a solution of the equation~(\ref{eq:riccati}) if and only if there exist a block diagonal matrix
    \[D=\diag(X_1,\ldots,X_t,0,\ldots,0)\in\bK^{n\times m}\]
    such that $X_i\in\bK^{s_i\times s_i}$ if a matrix such that $X_i^2=I_{s_i}$ for $i=1,\ldots,t$ where $s_i=\dim V_{WW^*,(\sigma_i)}$, and there exists a matrix
    $Y\in\bK^{n\times m}$ such that
    \begin{equation}\label{eq:soln}
       X=V\pinv{\Sigma}DU^*+P_{\nullsp{W}} Y P_{\nullsp{W^*}}. 
    \end{equation}
       
\end{theorem}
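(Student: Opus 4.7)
The plan is to combine the three preceding lemmas. By Lemma \ref{lem:solutions} it suffices to classify $m$-dimensional $M$-invariant subspaces $\cV \subset \bK^{m+n}$ on which $\pi$ is surjective, and to recover $X = V_2 V_1^{-1}$ from a basis $\begin{pmatrix} V_1 \\ V_2 \end{pmatrix}$ of $\cV$. By Lemma \ref{lem:diagonalizability} the eigenspaces of $M$ for $\pm \sigma_i^2$ each have dimension $s_i$, consisting of vectors $\begin{pmatrix} u \\ \pm \sigma_i^{-1} v \end{pmatrix}$ with $u \in V_{WW^*, (\sigma_i)}$ and $v = \sigma_i^{-1} W^* u$, while $\nullsp{M} = \nullsp{W^*} \times \nullsp{W}$.

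The key parameterization comes from Lemma \ref{lem:decomp}: the invariant subspace decomposes as $\cV = \bigoplus_i (\cV_{(\sigma_i^2)} \oplus \cV_{(-\sigma_i^2)}) \oplus \cV_{(0)}$, the projection to $V_{WW^*, (\sigma_i)}$ is injective on each of $\cV_{(\sigma_i^2)}, \cV_{(-\sigma_i^2)}$ with images $A_+, A_-$ that must satisfy $A_+ \oplus A_- = V_{WW^*,(\sigma_i)}$ by the dimension count of the lemma. To each such direct-sum decomposition I would associate the involution $X_i := P_+ - P_-$ built from the two oblique projectors; a short calculation gives $X_i^2 = I_{s_i}$, and the inverse formula $P_\pm = (I \pm X_i)/2$ (valid in characteristic zero) makes the assignment a bijection. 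Analogously, $\cV_{(0)}$ is forced, by $\dim \cV_{(0)} = m - r$ and the surjectivity of $\pi$ on it, to be the graph of some linear map $g \colon \nullsp{W^*} \to \nullsp{W}$, and any such graph is encoded by $P_{\nullsp{W}} Y P_{\nullsp{W^*}}$ for a (non-unique) $Y \in \bK^{n \times m}$.

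Finally, I would lift each column $u_j$ of $U$ to an element of $\cV$: to $\begin{pmatrix} u_j \\ \sigma_i^{-1} \widetilde{X_i u_j} \end{pmatrix}$ for $u_j$ inside $V_{WW^*,(\sigma_i)}$ (where $\widetilde{\cdot}$ is the SVD identification with $V_{W^*W,(\sigma_i)} \subset \bK^n$ sending the $j$-th column of $U$ to the $j$-th column of $V$), and to $\begin{pmatrix} u_j \\ g(u_j) \end{pmatrix}$ for $u_j$ spanning $\nullsp{W^*}$. Arranged in columns this gives $V_1 = U$, so $X = V_2 V_1^{-1} = V_2 U^*$. Reading $V^* V_2$ off block-by-block in the SVD bases yields exactly $\pinv{\Sigma} D$ plus a matrix supported on the null block (the matrix of $g$), with $D = \diag(X_1, \ldots, X_t, 0, \ldots, 0)$; multiplying by $V$ on the left produces the stated formula $X = V \pinv{\Sigma} D U^* + P_{\nullsp{W}} Y P_{\nullsp{W^*}}$. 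The main technical obstacle is verifying that $V^* V_2$ carries no off-block-diagonal terms between distinct singular-value blocks or between the singular-value and null blocks; this is precisely the content of the block-level surjectivity in Lemma \ref{lem:decomp}, which forces $V_1$ to coincide with $U$ rather than a more entangled basis matrix.
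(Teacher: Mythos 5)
Your plan is correct and follows essentially the same route as the paper: reduce to $m$-dimensional invariant subspaces of $M$ via Lemma~\ref{lem:solutions}, decompose them into eigenspaces using Lemmas~\ref{lem:diagonalizability} and~\ref{lem:decomp}, and read off $X=V_2V_1^{-1}$ from a lifted basis with $V_1=U$. Your parametrization of the splittings $A_+\oplus A_-=V_{WW^*,(\sigma_i)}$ by the involutions $X_i=P_+-P_-$ is just the coordinate-free version of the paper's choice $X_i=C_iD_iC_i^{-1}$ with $D_i$ a diagonal sign matrix, so the two arguments coincide.
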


\begin{proof}
    By Lemma~\ref{lem:solutions}, any solution correspond to an invariant subspace $\cV\subset\bK^{m+n}$ of matrix $M$, and by Lemmas~\ref{lem:diagonalizability} and~\ref{lem:decomp} there exists an SVD
    decomposition of $W=U\Sigma V^*$ over $\bK$ such that $U$ and $V^*$ induce a basis of $\cV$. 

    Recall that $s_i$ denotes the multiplicity of the singular value $\sigma_i$. Let $C=\diag(C_1,\ldots,C_t)\in\bK^{r\times r}$ be a block diagonal matrix with $C_i\in\bK^{s_i\times s_i}$ an invertible matrix and let $C_0\in\bK^{(m-r)\times (m-r)}$ be any invertible matrix. Let $D_i$ be a diagonal $s_i$-by-$s_i$ matrix with 1's and -1's on the diagonal, corresponding to the choice between $\sigma_i^2$ or $-\sigma_i^2$ eigenvalue of $M$. Let $Y\in\bK^{(n-r)\times (m-r)}$ be any matrix. Without losing generality one can replace $Y$ with $YC_0$. Let $V_0=V_{:,(r+1):n}$ and let $U_0=U_{:,(r+1):m}$. Then
\renewcommand{\arraystretch}{1.5}
     \[Z=\left[\begin{array}{c|c|c|c|c}
        U_1 C_1 & U_2 C_2 & \cdots & U_t C_t & U_0 C_0 \\ \hline 
        \frac{1}{\sigma_1}V_1 C_1 D_1 & \frac{1}{\sigma_2}V_2 C_2 D_2 & \cdots & \frac{1}{\sigma_t}V_t C_t D_t & V_0 YC_0\\ 
    \end{array}
    \right],\]
where , for $t_{-1}=0, t_i=s_1+\ldots+s_i$ for $i\ge 1$, the matrix $U_i=U_{:,(t_{i-1}+1):t_i}\in\bK^{m\times s_i}$ consists of left singular vectors of $W$, corresponding to the singular value $\sigma_i$, and similarly for $V_i\in\bK^{n\times s_i}$ for the left singular vectors, or in a more compact way, with $\widetilde{D}=\diag(D_1,\ldots,D_t)$
\begin{equation}\label{eq:eigenspace}
Z=\left[\begin{array}{c|c}
        U_{:,1:r}C & U_0 C_0 \\ \hline 
        V_{:,1:r}\pinv{\Sigma}C\widetilde{D} & V_0 YC_0\\ 
    \end{array}
    \right].
\end{equation}
%     , the top of the matrix $Z$ is equal to $UC$ and the bottom is equal to the matrix
%     \[V_{:,1:r}\begin{bmatrix}
%     \frac{1}{\sigma_1} I_{s_1} & 0 & \cdots & 0 \\
%     0 & \frac{1}{\sigma_2} I_{s_2} & \cdots & 0 \\
%     \vdots & \vdots  & \ddots & \vdots \\
%     0 & 0 & \cdots & \frac{1}{\sigma_t} I_{s_t} \\
% \end{bmatrix} C\begin{bmatrix}
%     D_1 & 0 & \cdots & 0 \\
%     0 & D_2 & \cdots & 0 \\
%     \vdots & \vdots  & \ddots & \vdots \\
%     0 & 0 & \cdots & D_t \\
% \end{bmatrix}
% \]
% extended by the matrix $V_0 YC_0$.

Since 
    \[Z\left[\begin{array}{c|c}
        U_{:,1:r}C & U_0 C_0 
    \end{array}\right]^{-1}=Z\left[\begin{array}{c}
        C^{-1}U_{:,1:r}^* \\ \hline C_0^{-1} U_0^*  
    \end{array}\right]=\left[\begin{array}{c}
        I \\ \hline X
    \end{array}
    \right],\]
    we get

    \[X=V_{:,1:r} \diag\brac*{\frac{1}{\sigma_1} I_{s_1},\ldots,\frac{1}{\sigma_t} I_{s_t}}  C\widetilde{D} C^{-1}U_{:,1:r}^* + V_0 Y U_0^*, \]
or with $X_i=C_i D_i C_i^{-1}$
    \[X=V_{:,1:r} \diag\brac*{\frac{1}{\sigma_1} X_1,\ldots,\frac{1}{\sigma_t} X_i} U_{:,1:r}^* + V_0 Y U_0^*. \]

Equivalently, by replacing $Y$ with $V_0^* Y U_0$

 \[X=V \diag\brac*{\frac{1}{\sigma_1} X_1,\ldots,\frac{1}{\sigma_t} X_i,0,\ldots,0} U^* + V_0V_0^* YU_0 U_0^*,\]
 which is the same as~(\ref{eq:soln}).
 It is immediate to check that for any choice of matrices $X_i$'s such that $X_i^2=I_{s_i}$ and any $Y$ matrix given by~(\ref{eq:soln}) is a solution of~(\ref{eq:riccati}).
\end{proof}

\begin{corollary}\label{cor:equivalent}
    Let $X$ be a solution of equation~(\ref{eq:riccati}). Then
    \[XWX=\pinv{W}.\]
\end{corollary}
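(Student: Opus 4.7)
The plan is to exploit the explicit parametrization of solutions supplied by Theorem~\ref{thm:main}: any $X$ solving~(\ref{eq:riccati}) has the form
\[X = V\pinv{\Sigma}DU^* + P_{\nullsp{W}}\,Y\,P_{\nullsp{W^*}},\]
with $D$ block diagonal and diagonal blocks $X_i$ satisfying $X_i^2=I_{s_i}$, and $Y$ arbitrary. Since $\pinv{W}=V\pinv{\Sigma}U^*$, the statement reduces to checking that $XWX$ always collapses to this particular matrix, regardless of the free data $D$ and $Y$.

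The first observation is that the ``free'' summand $P_{\nullsp{W}}\,Y\,P_{\nullsp{W^*}}$ is killed by $W$ on either side, because $W\,P_{\nullsp{W}}=0$ and $P_{\nullsp{W^*}}\,W=0$ follow directly from the SVD. Expanding $XWX$ with the two-summand form of $X$, three of the four resulting products vanish (both cross terms and the pure $Y$-term), leaving
\[XWX = V\pinv{\Sigma}DU^*\,W\,V\pinv{\Sigma}DU^*.\]

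Next, I would substitute $U^*WV=\Sigma$ to reduce the middle factor to a purely block-diagonal computation,
\[XWX = V\,\bigl(\pinv{\Sigma}\,D\,\Sigma\,\pinv{\Sigma}\,D\bigr)\,U^*.\]
The heart of the proof is then the block identity $\pinv{\Sigma}\,D\,\Sigma\,\pinv{\Sigma}\,D = \pinv{\Sigma}$. To see this, observe that $\Sigma\pinv{\Sigma}$ is the orthogonal projection onto the first $r$ coordinates of $\bK^m$, on which $D$ is supported, so this factor is absorbed by $D$; the $\sigma_i$'s coming from $\Sigma$ cancel the $\sigma_i^{-1}$'s in the left $\pinv{\Sigma}$; and the hypothesis $X_i^2=I_{s_i}$ is exactly what is needed to collapse each diagonal block of $D\cdot D$ to the identity, reassembling the $\sigma_i^{-1}I_{s_i}$ pattern of $\pinv{\Sigma}$.

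I do not foresee any real obstacle: every step is essentially forced by the structure Theorem~\ref{thm:main} provides, and the free parameter $Y$ is automatically eliminated. The only technical care required is bookkeeping of the rectangular dimensions of $\Sigma$, $\pinv{\Sigma}$, and $D$ during the block multiplication.
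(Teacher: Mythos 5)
Your proposal is correct and follows essentially the same route as the paper's proof: invoke the parametrization from Theorem~\ref{thm:main}, kill the $P_{\nullsp{W}}YP_{\nullsp{W^*}}$ terms using $WP_{\nullsp{W}}=0$ and $P_{\nullsp{W^*}}W=0$, and reduce to the block-diagonal identity $\pinv{\Sigma}D\Sigma\pinv{\Sigma}D=\pinv{\Sigma}$ via $X_i^2=I_{s_i}$ and the fact that the scalar factors $\tfrac{1}{\sigma_i}$ commute with each block. No gaps.
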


\begin{proof}
By Theorem~\ref{thm:main}, any solution is of the form~(\ref{eq:soln}).
Note that
    \[WP_{\nullsp{W}}YP_{\nullsp{W^*}}=0,\]
    \[P_{\nullsp{W}}YP_{\nullsp{W^*}}W=0,\]
    as $\rng{A}$ is orthogonal to $\nullsp{A^*}$ for any matrix $A$.
    Therefore
    \[XWX=(V\pinv{\Sigma}DU^*)U\Sigma V^*(V\pinv{\Sigma}DU^*)=V\pinv{\Sigma}U^*=\pinv{W},\]
    as diagonal matrices $\frac{1}{\sigma_i}I_{s_i}$ are in the center of each block.
\end{proof}

\begin{corollary}\label{cor:tripotent}
    Let $X$ be a solution of equation~(\ref{eq:riccati}). Then
    \[(WX)^2=P_{\rng{W}},\quad (XW)^2=P_{\rng{W^*}},\]
    \[(WX)^3=WX,\quad (XW)^3=XW.\]
\end{corollary}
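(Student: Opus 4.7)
The plan is to derive everything from Corollary~\ref{cor:equivalent}, which already gives $XWX=\pinv{W}$. With that identity in hand, the four claims reduce to rearranging products and invoking the defining properties of the Moore--Penrose pseudoinverse.

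First I would tackle $(WX)^2=P_{\rng{W}}$. The key observation is that $(WX)^2=W(XWX)$, so by Corollary~\ref{cor:equivalent} this equals $W\pinv{W}$, which is precisely the orthogonal projection onto $\rng{W}$. The case $(XW)^2=(XWX)W=\pinv{W}W=P_{\rng{W^*}}$ is symmetric. No SVD computation is needed at this point, which is the main conceptual saving.

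Next I would obtain the tripotency relations by composing with one more factor. From $(WX)^2=W\pinv{W}$ I get
\[(WX)^3=(WX)^2\cdot WX=W\pinv{W}WX=WX,\]
using the pseudoinverse identity $W\pinv{W}W=W$. The relation $(XW)^3=XW\cdot(XW)^2=XW\pinv{W}W=X(W\pinv{W}W)=XW$ follows in the same way.

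There is no genuine obstacle here; the only thing to verify carefully is that $P_{\rng{W}}=W\pinv{W}$ and $P_{\rng{W^*}}=\pinv{W}W$, which is standard. Should one prefer a self-contained derivation independent of Corollary~\ref{cor:equivalent}, one can plug the explicit form~(\ref{eq:soln}) into $WX$ and $XW$, note that the projection-sandwich term $P_{\nullsp{W}}YP_{\nullsp{W^*}}$ is annihilated on the appropriate side by $W$, and use $X_i^2=I_{s_i}$ together with $W=U\Sigma V^*$ to see directly that $WX=U\bracs*{\begin{smallmatrix}\widetilde D & 0\\ 0 & 0\end{smallmatrix}}U^*$ and $XW=V\bracs*{\begin{smallmatrix}\widetilde D & 0\\ 0 & 0\end{smallmatrix}}V^*$, both of which square to the corresponding orthogonal projections because $\widetilde D^2=I$.
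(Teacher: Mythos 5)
Your proof is correct and follows essentially the same route as the paper: square $WX$ and $XW$, use $XWX=\pinv{W}$ from Corollary~\ref{cor:equivalent} to identify the squares with $W\pinv{W}$ and $\pinv{W}W$, then absorb one more factor via $W\pinv{W}W=W$. If anything, your treatment of $(XW)^3$ is marginally cleaner, since composing on the side where $W\pinv{W}W=W$ applies avoids the paper's appeal to $\rng{X}\subset\rng{W^*}$.
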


\begin{proof}
By Corollary~\ref{cor:equivalent}
\[(WX)^2=W(XWX)=W\pinv{W},\quad (XW)^2=(XWX)W=\pinv{W}W.\]
By the previous line
\[(WX)^3=P_{\rng{W}}WX=WX,\quad (XW)^3=P_{\rng{W^*}}(XW)=XW,\]
as $\rng{X}\subset\rng{W^*}$.
\end{proof}

\begin{remark}
     When all singular values of $W$ are simple then the formula simplifies to
    \[X=V\pinv{\Sigma}DU^*+P_{\nullsp{W}} Y P_{\nullsp{W^*}},\]
    where $D=\diag(\pm 1,\ldots,\pm 1,0,\ldots,0)$.
\end{remark}

\begin{remark}
    Note the incidental similarity of Equation~\ref{eq:soln} to Tian's~\cite[Theorem~3.1(b), Eq.~(3.8)]{Tian512} for
    the $\{1,3,4\}$-inverse of $W$.
\end{remark}

\section{The Reverse Order Law}
In this section, it is investigated when $\pinv{B}\pinv{A}$ satisfies~(\ref{eq:riccati}) for $W=AB$. It turns out this is equivalent to the reverse order law.

The following Lemma characterizes solutions of~~(\ref{eq:riccati}) of the lowest rank, equal to $\rank W$. For the Penrose conditions/equations see~\cite[Chapter~1.1]{BenGreville} or the original~\cite[Theorem~1]{Penrose_1955}.

\begin{lemma}\label{lem:lowest_rank_solutions}
    \[WXW=\pinv{X}\Longleftrightarrow \rng{W^*}=\rng{X} \Longleftrightarrow P_{\nullsp{W}}YP_{\nullsp{W^*}}=0.\]
\end{lemma}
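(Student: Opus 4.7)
The plan is to exploit the parametrization from Theorem~\ref{thm:main}: every solution of~(\ref{eq:riccati}) decomposes as $X = S + N$ with $S = V\pinv{\Sigma}DU^*$ and $N = P_{\nullsp{W}}YP_{\nullsp{W^*}}$. The key structural observation is that the columns of $S$ lie in $\rng{V_{:,1:r}} = \rng{W^*}$, while the columns of $N$ lie in $\nullsp{W} = \rng{W^*}^\perp$; these are orthogonal complements in $\bK^n$. Moreover, because each block $X_i$ is an involution and hence invertible, $\rng{S} = \rng{W^*}$ and $\rank S = r$.

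The middle equivalence $\rng{W^*} = \rng{X} \iff N = 0$ then follows immediately from orthogonality. If $\rng{X} \subseteq \rng{W^*}$, then each column $s_j + n_j$ of $X$ has zero component in $\nullsp{W}$, forcing $n_j = 0$ and hence $N = 0$. Conversely, if $N = 0$ then $X = S$ and $\rng{X} = \rng{S} = \rng{W^*}$.

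For the first equivalence $WXW = \pinv{X} \iff N = 0$, the direction $(N = 0) \Rightarrow (WXW = \pinv{X})$ follows from a direct SVD computation: $WXW = U\Sigma\pinv{\Sigma}D\Sigma V^*$, and since $\Sigma\pinv{\Sigma}$ acts as the identity on the support of $D$, this collapses to $U\diag(\sigma_1 X_1,\ldots,\sigma_t X_t, 0, \ldots)V^*$. By unitary invariance of the Moore--Penrose inverse, $\pinv{X} = U\pinv{(\pinv{\Sigma}D)}V^*$, and each invertible block $\tfrac{1}{\sigma_i}X_i$ inverts blockwise to $\sigma_i X_i$, producing the same matrix. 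For the converse, I pass to column spaces of adjoints: $\rng{X} = \rng{(\pinv{X})^*} = \rng{(WXW)^*} \subseteq \rng{W^*}$, which by the middle equivalence yields $N = 0$.

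The main obstacle is the block-diagonal bookkeeping in the pseudoinverse computation: one must verify carefully that $\pinv{(\pinv{\Sigma}D)}$ really does behave blockwise when the $X_i$ are arbitrary involutions, not only $\pm I_{s_i}$. This reduces to the fact that $\pinv{B} = B^{-1}$ for invertible square $B$, applied to each block $\tfrac{1}{\sigma_i}X_i$, but one must track how the two factors of $\Sigma$ in $WXW$ combine to produce $\sigma_i$ multiplying $X_i$ (rather than collapsing via $X_i^2 = I_{s_i}$) so as to match $\pinv{X}$ exactly.
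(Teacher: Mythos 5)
Your proof is correct, but for the first equivalence it takes a genuinely different route from the paper. The paper never computes $\pinv{X}$ explicitly: it tests the four Penrose conditions for the pair $(X,\,WXW)$ and observes that three of them hold automatically for \emph{every} solution of~(\ref{eq:riccati}), by Corollaries~\ref{cor:equivalent} and~\ref{cor:tripotent} ($(WX)^2=P_{\rng{W}}$ and $(XW)^2=P_{\rng{W^*}}$ are Hermitian, and $(WX)^3=WX$), so that $WXW=\pinv{X}$ collapses to the single remaining condition $X(WXW)X=(XW)^2X=P_{\rng{W^*}}X=X$, i.e.\ $\rng{X}\subseteq\rng{W^*}$. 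You instead verify $WXW=\pinv{X}$ by an explicit blockwise SVD computation when the nullspace summand vanishes (unitary invariance of the pseudoinverse, blockwise pseudoinversion of a block-diagonal matrix, and $\bigl(\tfrac{1}{\sigma_i}X_i\bigr)^{-1}=\sigma_i X_i$ from $X_i^2=I_{s_i}$), and for the converse use $\rng{X}=\rng{(\pinv{X})^*}=\rng{(WXW)^*}\subseteq\rng{W^*}$. Both arguments are sound: the paper's is coordinate-free and recycles the tripotency corollaries, while yours makes the identity $WXW=U\diag(\sigma_1X_1,\ldots,\sigma_tX_t,0,\ldots,0)V^*=\pinv{X}$ explicit at the cost of the block bookkeeping you flag (which does go through, since the Moore--Penrose inverse of a block-diagonal matrix with invertible diagonal blocks and zero blocks elsewhere is obtained blockwise). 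The middle equivalence is handled identically in both proofs, via orthogonality of the two summands of~(\ref{eq:soln}) and the fact that the first summand has range exactly $\rng{W^*}$ because the $X_i$ are invertible. One small point: your converse directly gives only the inclusion $\rng{X}\subseteq\rng{W^*}$, but this suffices, both because your proof of the middle equivalence already shows that this inclusion alone forces $N=0$, and because the reverse inclusion $\rng{W^*}\subseteq\rng{X}$ holds for any solution of~(\ref{eq:riccati}).
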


\begin{proof}
Not that for any $X$ we have $\rng{W^*}\subset\rng{X}$ by~(\ref{eq:riccati}) and the summands in~(\ref{eq:soln}) are pairwise perpendicular and the first spans $\rng{W^*}$ for any matrix $D$. This proves the second equivalence. The first condition holds iff the Penrose condition are satisfied, that is
\begin{enumerate}[i)]
    \item $X(WXW)X=X$,
    \item $(WXW)X(WXW)=WXW$,
    \item $X(WXW)=(XW)^2$ is Hermitian,
    \item $(WXW)X=(WX)^2$ is Hermitian.
\end{enumerate}
By Corollary~\ref{cor:equivalent} conditions $ii),iii)$ hold and condition $iv)$ hold as $(WXW)X(WXW)=(WX)^3W=WXW$. Condition $i)$ holds iff $X(WXW)=(XW)^2X=P_{\rng{W^*}}X=X$, that is, iff $\rng{X}\subset\rng{W^*}$. This proves the equivalence of the first and the second condition.
\end{proof}

\begin{lemma}\label{lem:identities}
    If $X=\pinv{B}\pinv{A}$ satisfies the equation
\[X(AB)(AB)^*(AB)X=\brac*{AB}^*,\]
then
\begin{enumerate}[i)]
    \item $AB\pinv{B}\pinv{A}AB=\pinv{\brac*{\pinv{B}\pinv{A}}}$,
    \item $\pinv{B}\pinv{A}AB\pinv{B}\pinv{A}=\pinv{\brac*{AB}}.$ 
\end{enumerate}
\end{lemma}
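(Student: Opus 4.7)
The plan is to set $W = AB$ and note that the hypothesis states exactly that $X := \pinv{B}\pinv{A}$ solves the Riccati equation~(\ref{eq:riccati}) for this $W$. Part ii) I would then obtain directly from Corollary~\ref{cor:equivalent}, while for part i) I would reduce to Lemma~\ref{lem:lowest_rank_solutions} via a short rank argument.

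Part ii) is the easy half. Corollary~\ref{cor:equivalent} asserts $XWX = \pinv{W}$ for every solution, and substituting back $W = AB$, $X = \pinv{B}\pinv{A}$ gives precisely $\pinv{B}\pinv{A}\,AB\,\pinv{B}\pinv{A} = \pinv{(AB)}$, which is ii).

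For part i), observe that after substitution the desired identity is $WXW = \pinv{X}$. By Lemma~\ref{lem:lowest_rank_solutions}, this is equivalent to $\rng{W^*} = \rng{X}$. The inclusion $\rng{W^*} \subset \rng{X}$ is automatic for every solution of~(\ref{eq:riccati}), since each column of $W^*$ is visibly a column of $X\cdot(WW^*WX)$ and therefore lies in $\rng{X}$. For the reverse inclusion I would argue by dimension: Lemma~\ref{lem:equal_ranks} yields $\rank(\pinv{B}\pinv{A}) = \rank(AB) = \rank((AB)^*)$, so $\rng{X}$ and $\rng{W^*}$ have equal dimension, upgrading the inclusion to an equality. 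Lemma~\ref{lem:lowest_rank_solutions} then delivers $WXW = \pinv{X}$, which is part i).

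The main obstacle, if any, is recognizing that part i) requires not merely that $\pinv{B}\pinv{A}$ solves the Riccati equation, but that it is a solution of minimal possible rank; Lemma~\ref{lem:equal_ranks} is what supplies this extra piece of information, effectively ruling out any nonzero contribution from the $P_{\nullsp{W}} Y P_{\nullsp{W^*}}$ summand in the general formula~(\ref{eq:soln}). Part ii), by contrast, is a one-line specialization of Corollary~\ref{cor:equivalent} and needs no further ingredient.
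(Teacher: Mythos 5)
Your proposal is correct and follows essentially the same route as the paper: part ii) is read off from Corollary~\ref{cor:equivalent}, and part i) combines the automatic inclusion $\rng{W^*}\subset\rng{X}$ with the rank equality from Lemma~\ref{lem:equal_ranks} to invoke Lemma~\ref{lem:lowest_rank_solutions}. Your explicit justification of the inclusion (writing $W^*=X(WW^*WX)$) is a welcome elaboration of a step the paper states without comment.
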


\begin{proof}
Let $W=AB$. Since, in general $\rng{W^*}\subset\rng{X}$, and $\rank{X}=\rank{W^*}$ by Lemma~\ref{lem:equal_ranks}, Lemma~\ref{lem:lowest_rank_solutions} implies claim $i)$. Claim $ii)$ follows from Corollary~\ref{cor:equivalent}.
\end{proof}

\begin{theorem}\label{thm:rev_and_riccati}
\[\pinv{\brac*{AB}}=\pinv{B}\pinv{A}\Longleftrightarrow\pinv{B}\pinv{A}(AB)(AB)^*(AB)\pinv{B}\pinv{A}=(AB)^*.\]
In such a case, the solution $X=\pinv{B}\pinv{A}$ is given by $D=\diag(I_r,0)$ and $Y=0$.
\end{theorem}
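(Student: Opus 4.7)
The plan is to prove the two directions separately. The forward direction is immediate: if $\pinv{(AB)} = \pinv{B}\pinv{A}$, then $X = \pinv{B}\pinv{A} = \pinv{W}$, and $\pinv{W}$ always satisfies~(\ref{eq:riccati}) (as noted just before Theorem~\ref{thm:main}).

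For the backward direction, the strategy is to analyze $M := WX = (AB)\pinv{B}\pinv{A}$ and show $M = P_{\rng{W}}$; reducing to $X = \pinv{(AB)}$ then becomes a matter of inverting $W$ on an appropriate subspace. Corollary~\ref{cor:tripotent} gives $M^2 = P_{\rng{W}}$ and $M^3 = M$, so the minimal polynomial of $M$ divides $z(z-1)(z+1)$ and $M$ is diagonalizable with eigenvalues in $\{-1,0,1\}$. The additional ingredient is the factorization $M = A\cdot(B\pinv{B})\cdot\pinv{A}$: the nonzero eigenvalues of $M$ coincide with those of $(B\pinv{B})\pinv{A}\cdot A = (B\pinv{B})(\pinv{A}A)$, a product of two orthogonal projections $P,Q$. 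A short Cauchy--Schwarz argument (if $PQv = \lambda v$ with $\lambda\neq 0$, then $v = Pv$, so $\lambda v^*v = v^*PQv = v^*Qv = (Qv)^*(Qv) \in [0,v^*v]$) confines the eigenvalues of $PQ$, and hence of $M$, to $[0,1]$. Intersecting $\{-1,0,1\}\cap[0,1] = \{0,1\}$ together with diagonalizability forces $M^2 = M$; combined with $M^2 = P_{\rng{W}}$ this yields $M = P_{\rng{W}}$, i.e.\ $(AB)\pinv{B}\pinv{A} = W\pinv{W}$.

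To conclude, $W(\pinv{B}\pinv{A}-\pinv{W}) = 0$, so the columns of $\pinv{B}\pinv{A}-\pinv{W}$ lie in $\nullsp{W}$. By Lemma~\ref{lem:equal_ranks}, $\rank(\pinv{B}\pinv{A}) = \rank(AB)$, and since the two summands in~(\ref{eq:soln}) have column spaces in the orthogonal subspaces $\rng{W^*}$ and $\nullsp{W}$ respectively, their ranks add and the rank constraint forces the $P_{\nullsp{W}}YP_{\nullsp{W^*}}$ component of $\pinv{B}\pinv{A}$ to vanish, giving $\rng{\pinv{B}\pinv{A}} \subseteq \rng{W^*}$. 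Since $\rng{W^*}\cap\nullsp{W} = \{0\}$, the difference vanishes and $\pinv{B}\pinv{A} = \pinv{(AB)}$. The ``moreover'' assertion $D=\diag(I_r,0)$, $Y=0$ is then immediate by matching $\pinv{W} = V\pinv{\Sigma}U^*$ against~(\ref{eq:soln}). The principal obstacle is recognizing the projection-product structure behind $M$: the tripotency $(WX)^3 = WX$ alone admits involutional solutions with $-1$ eigenvalues (the other $D = \diag(X_i,0)$ in Theorem~\ref{thm:main}), and only the specific factored form $X = \pinv{B}\pinv{A}$ rules these out via the $[0,1]$ bound.
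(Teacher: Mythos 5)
Your proof is correct, and for the nontrivial direction it takes a genuinely different route from the paper. The paper disposes of $(\Longleftarrow)$ by invoking Tian's characterization of the reverse order law (condition $\langle 37\rangle$: $AB\pinv{B}\pinv{A}AB=\pinv{(\pinv{B}\pinv{A})}$), which it matches against Lemma~\ref{lem:identities}; the substance is thus outsourced to a cited result. You instead argue spectrally and self-containedly: writing $M=WX=A(B\pinv{B})\pinv{A}$, you observe that the nonzero spectrum of $M$ equals that of $(B\pinv{B})(\pinv{A}A)$, a product of two orthogonal projections, hence lies in $[0,1]$; combined with the diagonalizability and $\{-1,0,1\}$-spectrum forced by Corollary~\ref{cor:tripotent}, this kills the $-1$ eigenvalues and yields $M=M^2=P_{\rng{W}}$, after which the rank count from Lemma~\ref{lem:equal_ranks} eliminates the $P_{\nullsp{W}}YP_{\nullsp{W^*}}$ component and pins $X=\pinv{W}$. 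This is exactly the right leverage point: tripotency alone permits the involutive solutions $D=\diag(X_1,\ldots,X_t,0)$ of Theorem~\ref{thm:main}, and it is precisely the factored form $\pinv{B}\pinv{A}$ that excludes them. What your approach buys is independence from Tian's table of equivalences; what the paper's buys is brevity. One small point to tighten: when you claim the ranks of the two summands of~(\ref{eq:soln}) add, orthogonality of the \emph{column} spaces alone does not suffice (e.g.\ $e_1e_1^*+e_2e_1^*$ has rank $1$); you also need that the row spaces lie in the orthogonal subspaces $\rng{W}$ and $\nullsp{W^*}$, which they do here and which makes $\nullsp{X_1}+\nullsp{X_2}=\bK^m$, so the additivity is genuine — the paper's own Lemma~\ref{lem:lowest_rank_solutions} is equally terse on this point.
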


\begin{proof}
    $(\Longrightarrow)$ as in the introduction\\
    $(\Longleftarrow)$ by \cite[Theorem~11.1]{Tian512} point $\langle 37\rangle$ the reverse order law holds if and only if
    \[AB\pinv{B}\pinv{A}AB=\pinv{\brac*{\pinv{B}\pinv{A}}},\]
    which is exactly claim $i)$ from Lemma~\ref{lem:identities}
\end{proof}

\begin{remark}
    It seems that direct verification of the Penrose conditions for $\pinv{B}\pinv{A}$ and $AB$ is not straightforward.
\end{remark}

\begin{remark}
It can be shown using the Schur method, closely following the proof of Theorem~\ref{thm:main} that, in fact,
\[XWX=W^+ \Longleftrightarrow XWW^*WX=W^*.\]
\end{remark}

\begin{proof}
    (sketch) 
    Matrix\[M=\left[\begin{array}{c|c}
        0 & W\\ \hline
        \pinv{W} &  0\\
    \end{array}
    \right],\]
    has eigenvectors corresponding to the eigenvalues $\lambda=\pm 1$ of the form $w=\left[\begin{array}{c}
        u\\ \hline \pm \pinv{W}u
    \end{array}
    \right]\in\bK^{m+n}$, for any $u\in\rng{W}$ and eigenvectors  of the form $w=\left[\begin{array}{c}
        w_1 \\ \hline w_2
    \end{array}
    \right]\in\bK^{m+n}$,
    corresponding to the eigenvalue $0$ for any $w_1\in\nullsp{W^*}$ and $w_2\in\nullsp{W}$. This leads to an eigenspace of $M$ given by matrix~(\ref{eq:eigenspace}).
\end{proof}

% \begin{lemma}
%     If $X^2=I$ where $X\in\bK^{n\times n}$ then the singular values of $X$ occur in pairs $\sigma_i=\frac{1}{\sigma_j}$. If $n$ is odd, then in addition, there exists $i$ such that $\sigma_i=1$. If $n$ is even, then $\#\{i\mid \sigma_i=1\}$ is even.
% \end{lemma}

% \begin{proof}
%     Funny proof. Then $X=X^{-1}=\pinv{X}$.  Since the singular values are unique, there exists a permutation $\tau\in S_n$ (the symmetric group of order $n=2m$ or $n=2m+1$) such that
%     \[\sigma_i=\frac{1}{\sigma_{\tau(i)}},\]
%     for $i=1,\ldots,r$. Since $\sigma_{\tau(i)}=\frac{1}{\sigma_{\tau^2(i)}}$, i.e., 
%     \[\sigma_i=\sigma_{\tau^2(i)}.\]
%     When the size is odd, in the decomposition of $\tau$ into pairwise disjoint cycles, there exists an odd length one, say $\mu$ with $i$ in the support. Let $s$ be the order of $\mu$, then
%     \[\sigma_i=\sigma_{\mu^s(i)}=\sigma_{\mu^{s-2}(i)}=\ldots=\sigma_{\mu(i)},\]    
%     hence $\sigma_i=1$.
% \end{proof}

% \begin{remark}
%     The lemma does not hold for $X^k=I$ for $k\ge 3$ in general. (check the case $X=C\diag(\varepsilon^{k_1},\ldots,\varepsilon^{k_n})C^{-1}$ for $\varepsilon^k=1$ but $C$ is a real matrix)
% \end{remark}

\section{Uniqueness of Solutions}
\begin{lemma}\label{lem:uniqueness}
Assume that two solutions of~(\ref{eq:riccati}) in the form~(\ref{eq:soln}) are equal, that is 
\[V\pinv{\Sigma}D_1U^*+P_{\nullsp{W}} Y_1 P_{\nullsp{W^*}}=V\pinv{\Sigma}D_2U^*+P_{\nullsp{W}} Y_2 P_{\nullsp{W^*}}.\]    
Then 
\[D_1=D_2\quad\text{and}\quad P_{\nullsp{W}} Y_1 P_{\nullsp{W^*}}=P_{\nullsp{W}} Y_2 P_{\nullsp{W^*}}.\]

\end{lemma}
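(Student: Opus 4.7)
The plan is to observe that the two summands in~(\ref{eq:soln}) live in essentially orthogonal subspaces (row-wise and column-wise), so they cannot cancel each other. Then uniqueness of the $D$-piece follows by isolating it with unitary factors, while the null-space piece is automatically singled out.

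First, I would analyze the column spaces of the two summands. The matrix $V\pinv{\Sigma}DU^*$ has its column space contained in $\rng{V_{:,1:r}}=\rng{W^*}$, because $\pinv{\Sigma}D$ has nonzero entries only in its top-left $r\times r$ block (both $\pinv{\Sigma}$ and $D$ being supported there). On the other hand, the columns of $P_{\nullsp{W}}YP_{\nullsp{W^*}}$ lie in $\rng{P_{\nullsp{W}}}=\nullsp{W}$. Since $\rng{W^*}$ and $\nullsp{W}$ are orthogonal complements in $\bK^n$, they meet only in $0$.

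Rewriting the assumption as
\[
V\pinv{\Sigma}(D_1-D_2)U^*=P_{\nullsp{W}}(Y_2-Y_1)P_{\nullsp{W^*}},
\]
each column of the left-hand side lies in $\rng{W^*}$ while the same column of the right-hand side lies in $\nullsp{W}$. Hence every column is zero, so \emph{both} sides vanish separately. The vanishing of the right-hand side is exactly the desired equality $P_{\nullsp{W}}Y_1P_{\nullsp{W^*}}=P_{\nullsp{W}}Y_2P_{\nullsp{W^*}}$.

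For the $D$-part, multiply $V\pinv{\Sigma}(D_1-D_2)U^*=0$ on the left by $V^*$ and on the right by $U$, which are unitary, to obtain $\pinv{\Sigma}(D_1-D_2)=0$. Since $\pinv{\Sigma}$ acts as multiplication by the strictly positive scalars $\sigma_i^{-1}$ on the first $r$ rows and annihilates everything else, this forces the first $r$ rows of $D_1-D_2$ to vanish; the remaining rows are zero by the block-diagonal form assumed in Theorem~\ref{thm:main}. Therefore $D_1=D_2$. There is no real obstacle here—the only mild subtlety is recognizing that, even though the two summands may individually fail to be orthogonal as matrices in the Frobenius inner product, their column spaces live in complementary orthogonal subspaces, which is all that is needed to decouple them.
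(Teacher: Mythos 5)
Your proof is correct and follows essentially the same route as the paper: rearrange the equation, observe that the columns of the two sides lie in the orthogonal complementary subspaces $\rng{W^*}$ and $\nullsp{W}$, conclude both sides vanish, and then deduce $D_1=D_2$ from $\pinv{\Sigma}(D_1-D_2)=0$. You merely spell out the last step (cancelling the unitaries and the positive diagonal entries of $\pinv{\Sigma}$) in more detail than the paper does.
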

\begin{proof}
As
\[V\pinv{\Sigma}(D_1-D_2)U^*=P_{\nullsp{W}} (Y_2-Y_1) P_{\nullsp{W^*}}.\]
and $\nullsp{W}$ is perpendicular to $\rng{W^*}$ it follows that
\[V\pinv{\Sigma}(D_1-D_2)U^*=P_{\nullsp{W}} (Y_2-Y_1) P_{\nullsp{W^*}}=0.\]
In particular $D_1=D_2$.
    
\end{proof}
\begin{corollary}
Assume that $X=\pinv{B}\pinv{A}$ is a solution of~(\ref{eq:riccati}) for $W=AB$ and it is in the from~(\ref{eq:soln}). Then $D=\diag(I_r,0)$
and $P_{\nullsp{W}} Y P_{\nullsp{W^*}}=0$.
\end{corollary}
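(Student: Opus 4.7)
The plan is to combine Theorem~\ref{thm:rev_and_riccati} with the uniqueness Lemma~\ref{lem:uniqueness}. First I would invoke Theorem~\ref{thm:rev_and_riccati}: since $X=\pinv{B}\pinv{A}$ is assumed to solve~(\ref{eq:riccati}) for $W=AB$, the reverse order law holds, hence $X=\pinv{B}\pinv{A}=\pinv{\brac*{AB}}=\pinv{W}$.

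Next, I would exhibit one explicit representation of $\pinv{W}$ in the form~(\ref{eq:soln}). Using the SVD $W=U\Sigma V^*$, we have $\pinv{W}=V\pinv{\Sigma}U^*$. Set $D_0=\diag(I_r,0)\in\bK^{n\times m}$, so that the diagonal blocks are $I_{s_1},\ldots,I_{s_t}$ (each trivially satisfying $I_{s_i}^2=I_{s_i}$). Because $\pinv{\Sigma}$ vanishes outside the leading $r\times r$ block, the identity $\pinv{\Sigma}=\pinv{\Sigma}D_0$ holds, and therefore
\[X=\pinv{W}=V\pinv{\Sigma}D_0U^*+P_{\nullsp{W}}\cdot 0\cdot P_{\nullsp{W^*}},\]
which is a valid instance of~(\ref{eq:soln}) with $D=D_0$ and $Y=0$.

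Finally, Lemma~\ref{lem:uniqueness} asserts that any two representations of the same matrix in the form~(\ref{eq:soln}) must agree in their $D$ component and in their $P_{\nullsp{W}}YP_{\nullsp{W^*}}$ component. Applying this to the representation hypothesized in the statement and to the one just constructed yields $D=\diag(I_r,0)$ and $P_{\nullsp{W}}YP_{\nullsp{W^*}}=0$, as required. There is no real obstacle here: Theorem~\ref{thm:rev_and_riccati} does the heavy lifting by identifying $X$ with $\pinv{W}$, and Lemma~\ref{lem:uniqueness} supplies the uniqueness of the decomposition, so the corollary reduces to recognizing the canonical form of $\pinv{W}$.
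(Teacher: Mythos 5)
Your proposal is correct and takes essentially the same route as the paper's own proof: invoke Theorem~\ref{thm:rev_and_riccati} to conclude $X=\pinv{\brac*{AB}}=V\pinv{\Sigma}U^*$, recognize this as the instance of~(\ref{eq:soln}) with $D=\diag(I_r,0)$ and $Y=0$, and finish with Lemma~\ref{lem:uniqueness}. The paper leaves the uniqueness step implicit; you simply make it explicit.
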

\begin{proof}
By Theorem~\ref{thm:rev_and_riccati}
    \[\pinv{B}\pinv{A}=\pinv{\brac*{AB}}=V\Sigma^+U^*.\]
\end{proof}

Since 
\[P_{\nullsp{W}}Y P_{\nullsp{W^*}}=0\Longleftrightarrow \brac*{P_{\nullsp{W^*}}^\intercal\otimes P_{\nullsp{W}}}\vect{Y}=0,\]
when $W$ has simple singular values, the set of solutions consists of $2^r$ disjoint families of dimension $mn-(m-r)(n-r)=r(m+n-r)$ over $\bK$.

If $W$ is Hermitian then $X$ is Hermitian for Hermitian matrices $Y$. In such case $X$ is positive (resp. negative) definite (if and only if $D=I$ (resp. $D=-I$) and $Y$ restricted to $\nullsp{W}$ is positive (resp. negative) definite.

\section{Stabilizing and Maximal Solutions}
In this Section we assume that $W=W^*$ is Hermitian matrix and hence diagonalizable. Let $W=U\Sigma U^*$.

\begin{lemma}
 \[X=X^*\Longleftrightarrow D=D^*\text{ and }P_{\nullsp{W}} \brac*{Y-Y^*} P_{\nullsp{W}}=0.\]   
\end{lemma}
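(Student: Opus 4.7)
The plan is to reduce this to a direct application of the orthogonality argument already used in the proof of Lemma~\ref{lem:uniqueness}. Because $W$ is Hermitian, I take $V=U$ in the SVD $W=U\Sigma U^*$, so that $\nullsp{W^*}=\nullsp{W}$ and the solution formula from Theorem~\ref{thm:main} simplifies to
\[
X=U\pinv{\Sigma}DU^* + P_{\nullsp{W}}\,Y\,P_{\nullsp{W}}.
\]
The crucial algebraic observation is that $\pinv{\Sigma}$ and $D$ commute: $\pinv{\Sigma}=\diag(\sigma_1^{-1}I_{s_1},\ldots,\sigma_t^{-1}I_{s_t},0)$ acts as a scalar on each block $X_i$ of $D=\diag(X_1,\ldots,X_t,0)$, and $\pinv{\Sigma}$ is real. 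Consequently $(\pinv{\Sigma}D)^*=D^*\pinv{\Sigma}=\pinv{\Sigma}D^*$, and taking adjoints of $X$ gives
\[
X^*=U\pinv{\Sigma}D^*U^* + P_{\nullsp{W}}\,Y^*\,P_{\nullsp{W}}.
\]

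Next I would form the difference:
\[
X-X^*=U\pinv{\Sigma}(D-D^*)U^* + P_{\nullsp{W}}(Y-Y^*)P_{\nullsp{W}}.
\]
The first summand has column space contained in the span of the first $r$ columns of $U$, i.e.\ in $\rng{W}=\rng{W^*}$, while the second summand has column space in $\nullsp{W}=\nullsp{W^*}$. Since $\rng{W^*}$ is orthogonal to $\nullsp{W}$, the same reasoning as in Lemma~\ref{lem:uniqueness} shows that $X=X^*$ forces both summands to vanish separately:
\[
U\pinv{\Sigma}(D-D^*)U^*=0 \quad\text{and}\quad P_{\nullsp{W}}(Y-Y^*)P_{\nullsp{W}}=0.
\]
The second equation is one of the conclusions. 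For the first, multiplying by $U^*$ on the left and $U$ on the right gives $\pinv{\Sigma}(D-D^*)=0$; since $\pinv{\Sigma}$ is invertible on the top-left $r\times r$ block where $D$ and $D^*$ can be nonzero (and both vanish outside it), this forces $X_i=X_i^*$ for each $i$, hence $D=D^*$.

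The converse direction is immediate: if $D=D^*$ and $P_{\nullsp{W}}(Y-Y^*)P_{\nullsp{W}}=0$, then using the same commutativity $\pinv{\Sigma}D=D\pinv{\Sigma}=(\pinv{\Sigma}D)^*$ one reads off $X^*=X$ by substitution. There is no real obstacle here; the only point requiring care is to verify commutativity of $D$ and $\pinv{\Sigma}$, because without it the adjoint of the first summand would not preserve the $\rng{W^*}$/$\nullsp{W}$ decomposition needed for the orthogonality argument.
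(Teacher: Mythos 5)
Your proof is correct and takes essentially the same route as the paper: both reduce the statement to the orthogonality argument of Lemma~\ref{lem:uniqueness} (the two summands of~(\ref{eq:soln}) live in the mutually orthogonal spaces $\rng{W}$ and $\nullsp{W}$, so they must vanish separately), and then read off $D=D^*$ blockwise from $\pinv{\Sigma}(D-D^*)=0$. Your explicit verification that $\pinv{\Sigma}$ commutes with $D$ is a worthwhile detail the paper leaves implicit.
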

 
 \begin{proof}
 
     Under the assumption, Equation~(\ref{eq:soln}) takes the form
     \[X=U\pinv{\Sigma}DU^*+P_{\nullsp{W}} Y P_{\nullsp{W}}.\]
     If $X=X^*$ then by Lemma~\ref{lem:uniqueness}
     \[\pinv{\Sigma} D=D^*\brac*{\pinv{\Sigma}}^*.\]
     Since $D=\diag(X_1,\ldots,X_t,0,\ldots,0)$, by  considering each block
      this gives $X_i=X_i^*$ for $i=1,\ldots,t$, that is, $D=D^*$. In particular, each $X_i$ is an orthogonal symmetry. The other direction follows by inspection.
 \end{proof}

For the following definition, cf.~\cite[Definition~16.16.12,15.9.1]{Bernstein}.
\begin{definition}
A Hermitian solution $X_{max}$ of~(\ref{eq:riccati}) is called the maximal solution if $X\preceq X_{max}$ for any other Hermitian solution $X$. A Hermitian solution $X$ of~(\ref{eq:riccati}) called the stabilizing solution if $-W^3X$ is asymptotically stable. A matrix $A\in\bC^{n\times n}$ is called asymptotically stable if $\Re\lambda<0$ for any eigenvalue $\lambda$ of $A$.
\end{definition}    

\begin{lemma}
    Assume that $W$ is an invertible Hermitian matrix. Then $X=\pinv{W}=W^{-1}$ is the maximal solution and simultaneously the unique stabilizing solution.
\end{lemma}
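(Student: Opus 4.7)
The plan is to combine Theorem~\ref{thm:main} with the preceding lemma characterizing Hermitian solutions, and then reduce both claims (maximality and stability) to a straightforward spectral computation, block by block.

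First, I would exploit invertibility of $W$ to strip off the $Y$-term in~\eqref{eq:soln}: since $\nullsp{W}=\nullsp{W^*}=\{0\}$, every Hermitian solution takes the form $X=U\pinv{\Sigma}DU^{*}$ with $D=\diag(X_{1},\ldots,X_{t})$ where, by the previous lemma, each block $X_{i}$ is Hermitian and satisfies $X_{i}^{2}=I_{s_{i}}$. Such an $X_{i}$ is an orthogonal symmetry, so its eigenvalues lie in $\{\pm1\}$ and consequently $X_{i}\preceq I_{s_{i}}$, i.e.\ $D\preceq I$. Note also that within the $i$-th block $\pinv{\Sigma}$ is the scalar $\sigma_{i}^{-1}I_{s_{i}}$, so it commutes with $X_{i}$ blockwise.

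Second, for the maximal-solution statement, I would write
\[
W^{-1}-X \;=\; U\pinv{\Sigma}(I-D)U^{*}
\]
and check positive semidefiniteness block by block. In block $i$ the expression becomes $\sigma_{i}^{-1}(I_{s_{i}}-X_{i})$, which is a positive scalar times the PSD matrix $I_{s_{i}}-X_{i}$ (using $X_{i}\preceq I_{s_{i}}$ from Step~1 and the block-wise commutativity). Summing the blocks and conjugating by the unitary $U$ preserves PSD, so $W^{-1}-X\succeq 0$, proving $X\preceq W^{-1}$ for every Hermitian solution $X$, and therefore $W^{-1}=X_{\max}$.

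Third, for the stabilizing-solution claim, I would compute
\[
-W^{3}X \;=\; -U\Sigma^{3}\pinv{\Sigma}DU^{*} \;=\; -U\Sigma^{2}DU^{*},
\]
whose spectrum, restricted to block $i$, consists of $-\sigma_{i}^{2}\mu$ for each eigenvalue $\mu\in\{\pm1\}$ of $X_{i}$. Asymptotic stability demands $\Re\lambda<0$ for every such eigenvalue, which forces $\mu=+1$ throughout, i.e.\ $X_{i}=I_{s_{i}}$ for every $i$. This yields $D=I$ and so $X=U\pinv{\Sigma}U^{*}=W^{-1}$, while conversely $-W^{3}W^{-1}=-W^{2}$ has spectrum $\{-\sigma_{i}^{2}\}\subset(-\infty,0)$, confirming stability. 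Uniqueness is immediate from the forcing argument.

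The main obstacle is the sign bookkeeping in the decomposition $W=U\Sigma U^{*}$ for a general (not necessarily positive definite) invertible Hermitian $W$: the SVD and the spectral decomposition then differ by a unitary sign factor, and one has to verify that the Hermiticity compatibility $\pinv{\Sigma}D=D^{*}(\pinv{\Sigma})^{*}$ together with $D^{2}=I$ yields a $D$ whose blocks lie in the commutant of $\Sigma$ in the way used above. Once that is settled the PSD inequality in Step~2 and the eigenvalue analysis in Step~3 follow mechanically.
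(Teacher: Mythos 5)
Your proof is correct and follows essentially the same route as the paper: use invertibility to kill the $Y$-term, write every Hermitian solution as $X=U\pinv{\Sigma}DU^*$ with $D$ Hermitian, $D^2=I$, deduce $D\preceq I$ hence $X\preceq W^{-1}$, and read off the spectrum of $-W^3X=-U\Sigma^2DU^*$. Your Step 3 is in fact slightly more complete than the paper's, which only verifies that $-W^3W^{-1}=-W^2$ is stable and does not spell out the forcing argument $D=I$ that gives uniqueness; the sign bookkeeping you flag for indefinite $W$ (where the spectral factorization $W=U\Sigma U^*$ with nonnegative $\Sigma$ is unavailable and the SVD has $V\neq U$) is a genuine loose end that the paper's own proof shares rather than resolves.
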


\begin{proof}
    Since $P_{\nullsp{W}}=0$ any Hermitian solution of~(\ref{eq:riccati}) is of the form
    \[X=U\pinv{\Sigma}DU^*,\]
    for some invertible Hermitian and block diagonal matrix $D$ such that $D^2=I$. The eigenvalues of $D$ are equal to $1$ or $-1$ therefore $D\preceq I$ and consequently \[X=U\pinv{\Sigma}DU^*\preceq U\pinv{\Sigma}IU^*=\pinv{W}.\]
    Moreover $-W^3X=-W^2=-U\Sigma^2U^*$ and all its eigenvalues are negative.
\end{proof}

\section{Examples}
 The following code illustrates the correctedness of Theorem~\ref{thm:main} and shows that if $W^*=XWW^*WX$ then $(WX)^3=WX$ but possibly $(WX)^2\neq WX$.

\begin{lstlisting}
m = 10; n = 15; k = 7; r=k; % k<=min(m,n) 
W = rand(m,k)*rand(k,n);
PM = 2*randi([0 1], 1, r)-1; % random +1 or -1's
D = [ diag(PM) zeros(r,m-r);
      zeros(m-r,m)];
% generic W has simple singular values hence D is diagonal
[U,S,V] = svd(W);
X = V*pinv(S)*D*U'+(eye(n)-pinv(W)*W)*rand(n,m)*(eye(m)-W*pinv(W));
norm(X*W*W'*W*X-W') + norm(X*W*X-pinv(W)) + norm((X*W)^3-X*W) + norm((W*X)^3-W*X)
norm((W*X)^2-W*X)
norm((X*W)^2-X*W)
\end{lstlisting}
 
\begin{example}
    Let $W$ be a square unitary matrix. Let $W=U\Sigma V^*$ be an SVD decomposition for $U=W$ and $\Sigma=V=I$. Then $X=DW^*$ solves
    (\ref{eq:riccati}) for any square matrix $D$ such that $D^2=1$. In particular, $XW=D$ in general is not Hermitian.
\end{example}

The following code shows that, in general, $\rng{\pinv{B}\pinv{A}}\neq\rng{\pinv{\brac*{AB}}}$.
\begin{lstlisting}
A=rand(5,2)*rand(2,9); B=rand(9,3)*rand(3,8); 
X=pinv(B)*pinv(A); 
% different
rref(X')
rref(A*B)
\end{lstlisting}
In addition, in general $\rng{\pinv{B}\pinv{A}}=\rng{\pinv{\brac*{AB}}}\centernot\Longrightarrow \pinv{B}\pinv{A}=\pinv{\brac*{AB}}$, for example when $A^*$ is surjective, then the antecedent hold for any compatible matrix $B$. The converse obviously holds. Conditions equivalent to $\rng{\pinv{B}\pinv{A}}=\rng{\pinv{\brac*{AB}}}$ are discussed in~\cite[Lemma~11.3]{Tian512}, cf. point $\langle 26\rangle$.
\begin{lstlisting}
A=rand(10,9); B=rand(9,2)*rand(2,5); 
X=pinv(B)*pinv(A); norm(X-pinv(A*B))
rref(X')
rref(A*B)

\end{lstlisting}

\bibliographystyle{unsrt}  
\bibliography{references}  %%% Remove comment to use the external .bib file (using bibtex).
%%% and comment out the ``thebibliography'' section.

\end{document}